\documentclass[english, 12pt]{amsart}
\usepackage{amssymb}
\usepackage{amsfonts}
\usepackage{amscd}
\usepackage{color}
\usepackage{tikz}

\providecommand{\noopsort}[1]{}

\usepackage{hyperref}
\usepackage{cleveref}

\newbox\removebox
\newcommand\remove[2][blue]{%
\setbox\removebox=\ifmmode\hbox{$#2$}\else\hbox{#2}\fi%
\leavevmode
\rlap{\textcolor{#1}{\vrule height0.8ex depth-0.5ex width\wd\removebox}}%
\box\removebox
}
\long\def\bigremove#1{%
\par\setbox\removebox=\vbox{#1}%
\vbox{%
\vbox to0pt{\hbox{\tikz\draw[color=blue,thick] (0,0) -- (\wd\removebox,-\ht\removebox)  (\wd\removebox,0) -- (0,-\ht\removebox);}}
\box\removebox
}
}

\usepackage{mathrsfs} 

\newcommand{\cCe}{\cC^{\mathrm{e}}}
\newcommand{\cCsq}{\cC^{\square}}

\newcommand{\neww}{_{\mathrm{new}}}
\newcommand{\maxx}{_{\mathrm{max}}}

\newcommand{\statement}[1]{\text{\textup{#1}}}
\newcommand{\EZ}{\statement{(Eval=0)}}
\newcommand{\FZ}{\statement{(Fct=0)}}
\newcommand{\CZ}{\statement{(Coeff=0)}}
\newcommand{\ET}{\statement{(EvalTor)}}
\newcommand{\FT}{\statement{(FctTor)}}
\newcommand{\CT}{\statement{(CoeffTor)}}

\def\VG{\mathrm{VG}}
\newcommand{\RF}{{\rm RF}}

\def\gTPas
{{\rm gDP}}

\def\TPres
{{\rm PRES}}

\def\11{{\mathbf 1}}

\def\LL{{\mathbb L}}

\def\NN{{\mathbb N}}

\def\ZZ{{\mathbb Z}}

\def\cC{{\mathscr C}}

\def\cL{{\mathcal L}}

\def\cS{{\mathcal S}}

\newtheorem{thm}{Theorem}[section]
\newtheorem{lem}[thm]{Lemma}

\newtheorem{prop}[thm]{Proposition}

\theoremstyle{definition}

\newtheorem{notn}[thm]{Notation}

\newtheorem{example}[thm]{Example}

\newtheorem{def-prop}[thm]{Proposition-Definition}
\newtheorem{def-thm}[thm]{Theorem-Definition}
\newtheorem{def-lem}[thm]{Lemma-Definition}
\newtheorem{assumption}[thm]
{Assumption}

\theoremstyle{remark}
\newtheorem{remark}[subsection]
{Remark}
{Remark}

\Crefname{prop}{Proposition}{Propositions}
\Crefname{thm}{Theorem}{Theorems}
\Crefname{lem}{Lemma}{Lemmas}
\Crefname{cor}{Corollary}{Corollaries}
\Crefname{conj}{Conjecture}{Conjectures}
\Crefname{defn}{Definition}{Definitions}
\Crefname{defnsubsub}{Definition}{Definitions}

\theoremstyle{plain}

  {\par\medskip\noindent #1\par\begingroup%
    \advance\leftskip by 1em\advance\rightskip by 1em}%
  {\par\endgroup}

\usepackage[T1]{fontenc}
\usepackage{mathrsfs}

\definecolor{immi's color}{rgb}{.9,.3,0}

\begin{document}

\setcounter{tocdepth}{1} 

\author
{Raf Cluckers}

\address{Univ.~Lille,  CNRS, UMR 8524 - Laboratoire Paul Painlev\'e, F-59000 Lille, France, and
KU Leuven, Department of Mathematics, B-3001 Leu\-ven, Bel\-gium}
\email{Raf.Cluckers@univ-lille.fr}
\urladdr{http://rcluckers.perso.math.cnrs.fr/}

\author
{Immanuel Halupczok}
\address{Mathematisches Institut,
Universit\"atsstr. 1, 40225 D\"usseldorf,
Germany}
\email{math@karimmi.de}
\urladdr{http://www.immi.karimmi.de/en/}

\keywords{Motivic integration, motivic constructible functions, Grothendieck rings, integrability}

\thanks{The authors deeply thank Floris Vermeulen for noting the mistake in Proposition 4.1 of \cite{CH-eval}. 
R.C. was partially supported by KU Leuven IF C16/23/010 and acknowledges the support of the CDP C2EMPI, as well as the French State under the France-2030 programme, the University of Lille, the Initiative of Excellence of the University of Lille, the European Metropolis of Lille for their funding and support of the R-CDP-24-004-C2EMPI project. The second author is partially supported by the research training group
\emph{GRK 2240: Algebro-Geometric Methods in Algebra, Arithmetic and Topology}, and by the individual research grant \emph{Archimedische und nicht-archimedische Stratifizierungen h\"oherer Ordnung}, both funded by the DFG}

\subjclass[2020]{Primary 14E18; Secondary 03C98}

\title[Corrigendum to `Evaluation of motivic functions, ...']{Corrigendum to `Evaluation of motivic functions, non-nullity, and integrability in fibers', Advances in Mathematics, Vol. 409, Part A, Paper No. 108635, 29 pages, doi:10.1016/j.aim.2022.108635 (2022)}

\begin{abstract}
We correct the statements and proofs of the (auxiliary) Propositions~4.1 and 4.2 of our paper `Evaluation of motivic functions, non-nullity, and integrability in fibers' in this journal (2022), and we explain how the proofs of the main results can be adapted to work with those corrected propositions.
\end{abstract}

\maketitle

The authors regret that the proof of the furthermore part of \cite[Proposition~4.1]{CH-eval} does not work in the presence of torsion. This then also affects one part of \cite[Proposition~4.2]{CH-eval}, where that furthermore part is used. In this corrigendum, we formulate and prove corrected versions of the wrong parts of these two propositions (as Propositions~\ref{p.4.1} and \ref{p.4.2}), and we explain how to adapt the proofs of the main results of
\cite{CH-eval} to use the corrected statements (as Propositions~\ref{p.T1} and \ref{p.T2}).
In Sections~\ref{s.corr} and \ref{s.proof}, we deal with the results related to nullity of motivic functions, namely \cite[Proposition~4.1]{CH-eval} and its application in the proof of \cite[Theorem~1]{CH-eval}. Section~\ref{s.int} deals with the statements related to ingerability, namely \cite[Proposition~4.2]{CH-eval} and its application in the proof of \cite[Theorem~2]{CH-eval}.

\section{Corrected statements concerning nullity}
\label{s.corr}

We use notation and terminology from \cite{CH-eval}. In particular, we work in valued fields using an expansion $\cL$ of the Denef-Pas language by constant symbols. As in \cite{CH-eval}, we need to work in a non-elementary class $\cS$ of valued fields, all of which are of the form $(k((t)), k, \ZZ)$, for some fields $k$ of characteristic $0$, and we emphasize this by writing ``$\cS$-definable''.
Recall that non-elementarity of the class $\cS$ is relevant for the notion of a ``point'' of an $\cS$-definable set $X$: A point is a pair $(K, x)$ with $K \in \cS$ and $x \in X_K$. For details about this and the related notation, we refer the reader to \cite[Sections 2, 3.1]{CH-eval}. However, for the purpose of this corrigendum, one could mostly pretend to just work in one fixed valued field of the form $k((t))$ all the time.

In \cite[Sections 3.2, 3.3]{CH-eval}, several classes of motivic functions are introduced, in line with the classes of motivic functions introduced in \cite{CLoes} and \cite{CLexp}. This entire corrigendum applies to the two classes $\cC$ and $\cCe$. We introduce a short hand notation to prove things for both of them simultaneously.

\begin{notn}
Given any $\cS$-definable set $X$, $\cCsq(X)$ either stands for $\cC(X)$ or for $\cCe(X)$. By a ``motivic function on $X$'', we mean an element of $\cCsq(X)$.
\end{notn}

Almost the entire corrigendum is about motivic functions of a specific shape. We fix this once and for all.

\begin{assumption}\label{a.f}
Let $A$ be any $\cS$-definable set and let
\begin{equation}\label{sum}
f = \sum_{(a,b) \in L}  c_{a,b}\cdot  g^a \cdot  \LL^{b\cdot  g}
\in \cCsq(\VG_{\ge 0}^r \times A)
\end{equation}
be a motivic function, where
$L \subset \NN^r \times \ZZ^r$ is a finite set, $c_{a,b} \in \cCsq(A)$, $g = (g_1, \dots, g_r)$ is the coordinate projection from $\VG_{\ge 0}^r \times A$ to $\VG_{\ge 0}^r$,  $g^a$ stands for $g_1^{a_1}\cdots g_{r}^{a_{r}}$ and
$b \cdot g$ stands for $b_1\cdot g_1 + \dots+ b_{r} \cdot g_{r}$.
We sometimes implicitly set $c_{a,b} = 0$ for $(a,b) \notin L$.
\end{assumption}

Consider the following statements about such $f$:
\begin{enumerate}
 \item[\EZ]
 For every point $x$ on $\VG_{\ge 0}^r$, the restriction of $f_{|\{x\} \times A}$
 is zero (as an element of $\cCsq(\{x\} \times A)$).
 \item[\FZ] $f = 0$ (as an element of $\cCsq(\VG_{\ge 0}^r \times A)$).
 \item[\CZ] For all $(a,b) \in L$, we have $c_{a,b} = 0$ (in $\cCsq(A)$).
\end{enumerate}

Clearly, we have $\CZ \Rightarrow \FZ \Rightarrow \EZ$.
The ``furthermore'' part of \cite[Proposition~4.1]{CH-eval} claims that one also has the reverse implication $\FZ\Rightarrow\CZ$. This is potentially wrong, as Example~\ref{ex:non-torsion} shows. (In the proof, we were not careful enough concerning torsion elements in $\cCsq(\VG_{\ge 0}^r \times A)$.) That furthermore part is used in two places, namely
the proofs of \cite[Proposition~4.2]{CH-eval} and \cite[Theorem~1]{CH-eval}.
For the moment, let us consider the latter one.
(Proposition~4.2 is related to integrability; we will come back to it in Section~\ref{s.int}.)

Theorem~1 of \cite{CH-eval} states, among others, that if a motivic function $f \in \cCsq(X)$ evaluates to $0$ at every point on $X$, then $f$ itself is $0$. Case~2 on \cite[p.~23]{CH-eval} proves Theorem~1 under the assumption that $X \subset \VG^R \times \RF^n$. This is reduced to the special case where $X = \VG_{\ge0}^r \times A$, for some $A \subset \RF^{N}$, and where $f$ is as
in Assumption~\ref{a.f}.
Using Case~1 on \cite[p.~22]{CH-eval}, one deduces (from the assumption that $f$ evaluates to $0$ at every point on $X$) that $f$ satisfies $\EZ$, so to finish the proof of Case~2, it remains to prove:

\begin{prop}\label{p.T1}
For $A, f, L,\dots$ as in Assumption~\ref{a.f}, we have $\EZ\Rightarrow\FZ$.
\end{prop}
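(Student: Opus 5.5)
The plan is to reduce to the case $r=1$ by induction on $r$, and in the case $r=1$ to split $\VG_{\ge 0}$ into finitely many pieces on which $f$ collapses to a sum of coefficients with separated exponents. Throughout we work with the explicit description of $\cCsq(\VG_{\ge 0}^r\times A)$ from \cite[Sections 3.2, 3.3]{CH-eval}. There the motivic functions on a product $\VG^r_{\ge0}\times A$ with $A\subset\RF^N$ are generated by functions pulled back from $A$, times functions built from $\VG$-coordinates (powers $g_i$, $\LL^{g_i}$, and characteristic functions of Presburger sets). The relations come from the Grothendieck ring of definable sets over $A$, tensored appropriately.

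\textbf{Step 1: a quantifier-elimination normal form in the $\VG$-variables.} Partition $\VG_{\ge 0}^r$ into finitely many Presburger cells. This is done so that on each cell $C$ every point $x$ of $C$ lies in a family parametrised by residue-class data and linear conditions, and $C$ is in definable bijection with a product of sets of the form $\NN^{s}$ shifted by congruence classes. Since $f=0$ can be checked cell by cell (restriction to a definable partition is injective on $\cCsq$), it suffices to show $f_{|C\times A}=0$ for each cell. On a cell, after a linear change of coordinates, $f$ is again of the form \eqref{sum}, with new coefficients in $\cCsq(A)$ (possibly after enlarging $A$ by finite residue-class data, which is harmless). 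Alternatively, we proceed by induction on $r$, treating $g_r$ as the only $\VG$-variable and absorbing $g_1,\dots,g_{r-1}$ into the parameter set $A':=\VG^{r-1}_{\ge0}\times A$. This changes the coefficients to lie in $\cCsq(A')$. So the key case is $r=1$ with a general parameter set $A$, and the induction step then uses the $r-1$ case to handle the fibre statements.

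\textbf{Step 2: the case $r=1$.} Write $f=\sum_{b}P_b(g)\LL^{bg}$ with $P_b\in\cCsq(A)[g]$. The hypothesis \EZ\ says that for every $n\in\NN$ the element $\sum_b P_b(n)\LL^{bn}\in\cCsq(A)$ vanishes. For each $n$ it is a single element of $\cCsq(A)$, namely $\sum_{a,b}c_{a,b}n^a\LL^{bn}$. A natural idea is to use enough values $n$ to form a Vandermonde-type invertible system and conclude $c_{a,b}=0$. This fails in general because of torsion, which is precisely the issue in Example~\ref{ex:non-torsion}. So instead of aiming for \CZ, I would directly construct the vanishing of $f$. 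Fix $N$ larger than the number of terms. I would show that $f$ lies in the $\cCsq(A)$-submodule generated by the finitely many functions $\11_{g=n}$ for $n<N$, plus a combination that is forced to vanish by a linear recurrence. Concretely, the sequence $u_n:=\sum_{a,b}c_{a,b}n^a\LL^{bn}$ satisfies a linear recurrence with integer-polynomial-in-$\LL$ coefficients: the characteristic polynomial is $\prod_b(T-\LL^b)^{d+1}$, after inverting $\LL$. I would then write $f$ itself as a $\cCsq(\VG_{\ge0}\times A)$-linear combination of the shifted values $u_{g+j}$, $j\le D$, via the identity $\11_{g\ge D}\cdot f=\sum_j \lambda_j(g)\,\cdot$ (translate of $f$). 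Here I would use that in $\cCsq(\VG_{\ge0}\times A)$ one can express $g^a\LL^{bg}$ through finite differences. Together with $f\cdot\11_{g=n}=u_n\cdot\11_{g=n}=0$ for small $n$, this gives $f=0$.

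\textbf{Main obstacle.} The delicate step is turning the pointwise vanishing of $u_n$ for all $n$ into the vanishing of $f$ as a function, without ever dividing by non-units or passing through \CZ. I would try first the following. Express the evaluation map $\cCsq(\VG_{\ge0}\times A)\to\prod_n\cCsq(A)$ restricted to the module $M$ of functions of shape \eqref{sum} with bounded degrees. Then show that $M$ is generated, over $\cCsq(A)$, by $\11_{g=n}$ for $n<D$ and by functions $h$ satisfying the recurrence identically in $\cCsq(\VG_{\ge0}\times A)$. From this, vanishing of the initial values $u_0,\dots,u_{D-1}$ forces $f=0$. Checking that the recurrence holds as an identity in the function ring, rather than only pointwise, is where I expect the real work to be. It should follow from the polynomial identity $\prod_b(\sigma-\LL^b)^{d+1}(g^a\LL^{b'g})=0$ for the shift operator $\sigma$, which is purely formal.
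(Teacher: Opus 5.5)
\textbf{Where Step~2 breaks down.} Your reduction to $r=1$ is fine and matches the paper: absorb $g_1,\dots,g_{r-1}$ into $A'=\VG_{\ge0}^{r-1}\times A$, and use the $(r-1)$-case to obtain \EZ{} relative to $A'$. The gap is in Step~2, which carries all the content of the statement.

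The identity $\prod_b(\sigma-\LL^b)^{d+1}h=0$ holds for \emph{every} $h$ of shape \eqref{sum} with the given bounds on $a$ and $b$. So the decomposition of $M$ you aim for is vacuous, since every element of $M$ already satisfies the recurrence identically. The recurrence therefore gives you nothing beyond \EZ.

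Your remaining claim is that an $h\in M$ with $u_0=\dots=u_{D-1}=0$ is zero. That is injectivity of $h\mapsto(h(0),\dots,h(D-1))$ on $M$, which is at least as strong as \EZ$\Rightarrow$\FZ{} for $r=1$, so the argument is circular. Unrolling $\11_{g\ge D}f=\sum_j\lambda_j\cdot(\text{translate of }f)$ only shows $f_{|\{n\}\times A}=0$ for one $n$ after another, which just reproduces \EZ. There is no induction over $n\in\NN$ inside $\cCsq(\VG_{\ge0}\times A)$ that upgrades this to $f=0$. What is missing is a step that turns evaluations into coefficient information through a transformation invertible over $\cCsq(A)$. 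That is precisely the Vandermonde-type idea you discarded.

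\textbf{How the paper gets that invertibility.} The paper keeps the coefficient idea but weakens the target. The shift $f(x+1)-\LL^{b_{\max}}f(x)$, with induction on $\max L$, proves \ET$\Rightarrow$\CT{} (Proposition~\ref{p.4.1}(2)). The only non-unit that appears is the factor $a_{\max}$ coming from $(g+1)^{a_{\max}}$, which is why one gets torsion rather than zero.

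Then (Lemma~\ref{l}) choose $N$ with $N c_{a,b}=0$ for all $(a,b)$ and pull back along $x\mapsto Nx+i$. The polynomial parts collapse to the constants $\sum_a c_{a,b}i^a$, leaving a purely exponential sum in $g$. For that sum the same shift argument involves only the units $\LL^b-\LL^{b_{\max}}$, so \EZ$\Rightarrow$\CZ{} (Proposition~\ref{p.4.1}(1)). This gives $\sum_a c_{a,b}i^a=0$ for all $i$ and $b$, hence $f=0$.

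\textbf{A possible repair of your own route.} You could also rescue your Vandermonde idea by a change of basis, $g^a=\sum_k S(a,k)\,k!\binom{g}{k}$, where $S(a,k)$ are Stirling numbers of the second kind. The functions $\binom{g}{k}$ are motivic: on each residue class mod $k!$ they are integer polynomials in the Presburger-definable function $\lfloor g/k!\rfloor$. Since $\binom{g+1}{k}=\binom{g}{k}+\binom{g}{k-1}$, the same shift induction in this basis involves only units. It then shows that \EZ{} forces all coefficients in the new basis to vanish. Either way, the decisive input is this invertibility, not the recurrence.
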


This is where the furthermore part of \cite[Proposition~4.1]{CH-eval} is used.
Inspecting that proof carefully shows not only that it can be adapted to use the modified furthermore part that we state below as Proposition~\ref{p.4.1}, but moreover, that the key ingredients of the proof of Proposition~\ref{p.4.1} itself are also already contained there. We will nevertheless give detailed proofs of Propositions~\ref{p.T1} and \ref{p.4.1} in this corrigendum.

To state the corrected furthermore part, we introduce weaker version of the above statements about $f$, namely, where ``$=0$'' has been replaced by ``is torsion'':
\begin{enumerate}
 \item[\ET]
  For every point $x$ on $\VG_{\ge 0}^r$, the restriction of $f$ to $\{x\} \times A$
 is a torsion element of $\cCsq(\{x\} \times A)$ (meaning that there exists a non-zero integer $N$ such that $N\cdot f(x) = 0$).
 \item[\FT] $f$ is a torsion element of $\cCsq(\VG_{\ge 0}^r \times A)$.
 \item[\CT] For all $(a,b) \in L$, $c_{a,b}$ is a torsion element of $\cCsq(A)$.
\end{enumerate}

We shall prove the following two variants of the furthermore part of \cite[Proposition~4.1]{CH-eval}:

\begin{prop}\label{p.4.1}
For $A, f, L,\dots$ as in Assumption~\ref{a.f}, we have the following:
\begin{enumerate}
 \item If $L \subset \{0\} \times \ZZ^r$, then $\EZ \Leftrightarrow \FZ \Leftrightarrow \CZ$.
 \item $\ET \Leftrightarrow \FT \Leftrightarrow \CT$
\end{enumerate}
\end{prop}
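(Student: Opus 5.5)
Since $\CZ\Rightarrow\FZ\Rightarrow\EZ$ and $\CT\Rightarrow\FT\Rightarrow\ET$ are clear, it suffices to prove $\EZ\Rightarrow\CZ$ when $L\subset\{0\}\times\ZZ^r$, and $\ET\Rightarrow\CT$ in general. In both cases the idea is to evaluate $f$ at suitably chosen points $x\in\VG_{\ge0}^r$. Each evaluation gives a linear relation among the $c_{a,b}$ in $\cCsq(A)$, with coefficients that are integers $x^a$ times powers $\LL^{b\cdot x}$. We then invert this system using a Vandermonde-type argument.

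For part (2), I would first reduce to $r=1$ by induction on $r$. Group the terms of $f$ according to the exponents $(a_r,b_r)$ of the last variable. Fixing $g_r=x_r$ turns $f$ into a function of the same shape in $r-1$ variables, whose coefficients are $\sum_{a_r,b_r} c_{a,b}\,x_r^{a_r}\LL^{b_rx_r}$. Conversely, fixing $(g_1,\dots,g_{r-1})$ gives a one-variable function over $A$. Applying the one-variable statement and then the induction hypothesis yields $\CT$.

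For $r=1$, assume that for each $x\in\NN$ there is $N_x\neq 0$ with $N_x\sum_{(a,b)} c_{a,b}\,x^a\LL^{bx}=0$. After multiplying by a large power of $\LL$ we may take all $b\ge0$; this is harmless because $\LL$ is invertible in $\cCsq$. Choose finitely many values $x_1,\dots,x_M$, with $M=|L|$, and let $N$ be the product of the $N_{x_j}$. Then the vector $(N c_{a,b})_{(a,b)\in L}$ is killed by the matrix $V=(x_j^a\LL^{bx_j})_{j,(a,b)}$, whose entries lie in the polynomial ring $\ZZ[\LL]\subset\cCsq(A)$, or in the image of $\ZZ[\LL,\LL^{-1}]$. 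Multiplying by the adjugate gives $\det(V)\,N c_{a,b}=0$. The remaining problem is to remove the factor $\det V$. Here I expect to use the structural fact, presumably from the earlier paper, that modulo torsion $\cCsq(A)$ behaves as a module over $\ZZ[\LL,\LL^{-1},(1-\LL^{-i})^{-1}]$ in which nonzero polynomials in $\LL$ act injectively. This is the step I expect to be the main obstacle: $\LL$ is not a variable in $\cCsq(A)$, so I must check that $\det V$ is a product of an integer, a power of $\LL$, and factors $1-\LL^{-i}$, all of which are invertible modulo torsion.

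To arrange this, I would pick the points in a generalized-Vandermonde pattern. Take $x_j=x_0+jm$ for large $m$. Then $\LL^{bx_j}$ becomes a power of $\LL^{m}$, and the confluent Vandermonde determinant in the variables $\LL^{b}$ factors as a product of differences $\LL^{b}-\LL^{b'}$, each of which is a power of $\LL$ times $(1-\LL^{-i})$, together with integer factorial factors coming from the polynomial parts $x^a$. An integer factor of $\det V$ is absorbed into $N$, giving $\CT$.

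For part (1), every $a=0$, so $V$ is an honest Vandermonde matrix in the distinct elements $\LL^{b}$, for $b$ ranging over the $b$'s appearing in $L$. Its determinant then contains no integer factor, only powers of $\LL$ and factors $1-\LL^{-i}$. If these are genuinely invertible in $\cCsq(A)$, as I would verify from the definitions in \cite{CH-eval}, then $\det(V)c_{a,b}=0$ gives $c_{a,b}=0$ with no torsion appearing. This explains why part (1) holds without torsion issues while the general case needs it.
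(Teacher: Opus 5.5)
Your argument works once one step is repaired, and it takes a different route from the paper.

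\textbf{Comparison with the paper.} For $r=1$ the paper inducts on $\max L$ in the lexicographic order. It applies $f(x+1,w)-\LL^{b\maxx}f(x,w)$, which either kills the top term or lowers its $a$-degree. This only needs invertibility of $\LL^{b\maxx}$ and of $\LL^b-\LL^{b\maxx}$, and the only integer factor it ever introduces is $a\maxx$. You instead solve the evaluation system in one shot, via the adjugate and a confluent Vandermonde determinant. That is the same elimination done all at once: it gives you an explicit torsion bound, while the paper avoids computing any determinant.

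Your reduction to $r=1$ (one-variable case at fixed values of the other coordinates, then the inductive \ET$\Rightarrow$\CT) is a leaner variant of the paper's. The paper instead passes through \ET{} relative to $A'=\VG_{\ge0}^{r-1}\times A$ and then the inductive \FT$\Rightarrow$\CT.

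What you call the main obstacle is not one. $\cCsq(A)$ is an algebra over $\ZZ[\LL,\LL^{-1},(1-\LL^{-i})^{-1}_{i>0}]$, so $\LL^{b'm}-\LL^{bm}=\LL^{b'm}(1-\LL^{-(b'-b)m})$ is genuinely invertible; the paper uses exactly this for $\LL^b-\LL^{b\maxx}$. You should drop the unneeded and unproven claim that nonzero polynomials in $\LL$ act injectively modulo torsion. Also, making all $b\ge 0$ would require multiplying by a power of $\LL^{g}$, not of $\LL$, but your method does not need that normalization.

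\textbf{The step that fails as stated} is the factorization of $\det V$. The confluent Vandermonde formula requires that, for each $b$, the exponents $a$ with $(a,b)\in L$ form an initial segment $\{0,\dots,\alpha_b\}$. Otherwise the determinant need not factor. For example, take $L=\{(1,0),(0,1),(0,2)\}$ and the points $0,m,2m$. Then $\det V=-m\,y(y-1)^2(y+2)$ with $y=\LL^m$, and nothing makes $\LL^m+2$ invertible, or even injective modulo torsion.

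The fix is to pad $L$ with zero coefficients. Let $B$ be the set of $b$ occurring in $L$, set $L'=\{(a,b): b\in B,\ a\le\alpha_b\}$, and use $|L'|$ points $x_j=x_0+jm$ (already $x_0=0$, $m=1$ works). Make two triangular changes of basis: from $(x_0+jm)^a$ to $j^k$, and, via Stirling numbers, from $j^k y^j$ to $\binom{j}{k}y^{j-k}$. This gives
\[
\det V=\pm\prod_{b\in B}\prod_{a\le\alpha_b}a!\,m^a\cdot\LL^{e}\cdot\prod_{b<b'}\bigl(\LL^{b'm}-\LL^{bm}\bigr)^{(\alpha_b+1)(\alpha_{b'}+1)}
\]
for some $e\in\ZZ$. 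This is a nonzero integer times a unit, which is exactly what you need. In part (1) all $\alpha_b=0$, so the integer factor is $1$ and no torsion appears.
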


So: The original version $\FZ\Rightarrow\CZ$ of the furthermore part holds under an additional assumption on $L$; without assumption on $L$, we at least have the torsion variant $\FT\Rightarrow\CT$.
Moreover, we already moved large parts of the proof of Proposition~\ref{p.T1} into Proposition~\ref{p.4.1} by proving the stronger conclusions $\EZ\Rightarrow\CZ$ and $\ET\Rightarrow\CT$.

The proof of Proposition~\ref{p.T1} needs one more ingredient:

\begin{lem}\label{l}
For $A, f, L,\dots$ as in Assumption~\ref{a.f}, if $\EZ$ and $\CT$ hold, then
for each $b$, we have $\sum_{a} c_{a,b}g^a = 0$. In particular, $f = 0$.
\end{lem}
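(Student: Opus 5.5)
The plan is to use the torsion hypothesis to make the polynomial factors $g^a$ locally constant on congruence classes. This reduces everything to functions in which only exponentials $\LL^{b\cdot g}$ occur, and to those Proposition~\ref{p.4.1}(1) applies.

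Fix a non-zero integer $N$ with $N\cdot c_{a,b}=0$ for all $(a,b)\in L$; this exists by $\CT$ and finiteness of $L$. Write $h_b := \sum_a c_{a,b} g^a$, so that $f=\sum_b h_b\,\LL^{b\cdot g}$. For each $e\in\{0,\dots,N-1\}^r$, let $X_e\subset \VG_{\ge0}^r\times A$ be the $\cS$-definable set where $g_i\equiv e_i \bmod N$ for all $i$. These finitely many sets partition $\VG_{\ge 0}^r\times A$. Consider the $\cS$-definable bijection $\phi_e\colon \VG_{\ge0}^r\times A\to X_e$, $(g',y)\mapsto (e+Ng',y)$.

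The key observation is the identity $(e+Ng')^a = e^a + N\cdot p_a(g')$, where $p_a$ is a polynomial with integer coefficients. Since $p_a(g')$ is (the image of) a definable $\ZZ$-valued function, it is an element of $\cCsq$. Hence
\[
\phi_e^*(c_{a,b}g^a) = c_{a,b}e^a + (N c_{a,b})\,p_a(g') = c_{a,b}e^a .
\]
Consequently
\[
\phi_e^* f = \sum_b d_{b,e}\,\LL^{b\cdot e}\,\LL^{(Nb)\cdot g'}, \qquad d_{b,e}:=\sum_a c_{a,b}e^a\in\cCsq(A).
\]
This is a function as in Assumption~\ref{a.f} whose index set lies in $\{0\}\times\ZZ^r$, and distinct $b$ give distinct $Nb$. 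Moreover, $\phi_e^*f$ still satisfies $\EZ$: each fiber $\{x'\}\times A$ is mapped by $\phi_e$ isomorphically onto the fiber $\{e+Nx'\}\times A$ of $f$, where $f$ restricts to $0$. Proposition~\ref{p.4.1}(1) therefore gives $d_{b,e}\LL^{b\cdot e}=0$ in $\cCsq(A)$, and since $\LL$ is invertible, $d_{b,e}=0$ for all $b$ and $e$.

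Running the same computation backwards, $\phi_e^*h_b = d_{b,e}=0$, so $h_b$ restricts to $0$ on each $X_e$. Since the $X_e$ form a finite $\cS$-definable partition, and $\cCsq$ of a disjoint union is the product of the $\cCsq$ of the pieces, we get $h_b=0$. This gives the first claim, and $f=\sum_b h_b\LL^{b\cdot g}=0$ follows.

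The step needing the most care is the identity $\phi_e^*(c\,g^a)=c\,e^a$ inside $\cCsq$. One must check that pulling back along $\phi_e$ is compatible with the ring structure, that $g^a$ pulls back to the polynomial $(e+Ng')^a$ in the $\ZZ$-valued coordinate functions, and that the restriction/pullback to $X_e$ interacts correctly with the Proposition~\ref{p.4.1} setting. These are the standard functoriality properties of $\cCsq$ from \cite{CH-eval}.
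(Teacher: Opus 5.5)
Your proposal is correct and is essentially the paper's own proof: fix $N$ killing all $c_{a,b}$, pull back along $x\mapsto Nx+e$ on each residue class so that $c_{a,b}(Ng'+e)^a$ collapses to $c_{a,b}e^a$, apply Proposition~\ref{p.4.1}(1) to the resulting purely exponential sum to get $\sum_a c_{a,b}e^a=0$, and reassemble over the finitely many classes. The only difference is cosmetic: you phrase it via the partition $X_e$ and bijections $\phi_e$ rather than the maps $s_i$ on all of $\VG_{\ge0}^r\times A$, and you should take $N>0$ so that the residue classes $\{0,\dots,N-1\}^r$ make sense.
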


\begin{proof}[Proof of Proposition~\ref{p.T1}]
First note that $\EZ$ trivially implies $\ET$; then apply Proposition~\ref{p.4.1}(2) to get $\CT$, and finally use the lemma to deduce $\FZ$.
\end{proof}

Before proceeding with the main proofs, we give an example showing that the implication $\FZ\Rightarrow\CZ$ is most likely really wrong in general (namely, as soon as $\cC(A)$ contains torsion elements).

\begin{example}\label{ex:non-torsion}
Suppose $\cC(A)$ contains a non-zero element $c$ which is $2$-torsion, that is, $2c=0$, for some $\cS$-definable set $A$, and consider the function $f \in \cC(\VG_{\geq 0}\times A)$ given by $f = cg + cg^2$ (where as usual, $g\colon \VG_{\geq 0}\times A \to \VG_{\geq 0}$ is the coordinate projection). Clearly, $f$ does not satisfy $\CZ$, but
one easily verifies that $f$ satisfies $\EZ$, and hence also $\FZ$ (e.g. by Lemma~\ref{l}).
\end{example}


In the next section, we prove Proposition~\ref{p.4.1} and Lemma~\ref{l}. As already announced, the other place where the furthermore part of \cite[Proposition~4.1]{CH-eval} is used (namely \cite[Proposition~4.2]{CH-eval}) will be treated in Section~\ref{s.int}.

\section{Proofs concerning nullity}
\label{s.proof}

We start by giving a proof of Proposition~\ref{p.4.1}(2). We will not give a separate proof of
Proposition~\ref{p.4.1}(1), since it has exactly the same structure; one just needs to replace ``is torsion'' by ``is zero'' in the obvious places and drop the part of the proof where $L$ is not a subset of $\{0\} \times \ZZ^r$. (In that part, torsion cannot be avoided.)

\begin{proof}[Proof of Proposition~\ref{p.4.1}(2)]
It suffices to prove $\ET \Rightarrow \CT$, since we trivially have $\CT \Rightarrow \FT \Rightarrow \ET$.

It is enough to prove the $r=1$ case. Indeed, the general case can be reduced to that case using induction as follows:
Set $A' = \VG_{\ge 0}^{r-1} \times A$ and write $f$ as a sum of the form \eqref{sum} ``relative to $A'$'', i.e.,
\begin{equation}\label{eq.red1}
f = \sum_{(a_1, b_1) \in L'} c'_{a_1,b_1} \cdot g^{a_1}\cdot \LL^{b_1\cdot g_1},
\end{equation}
for some suitable $L' \subset \NN \times \ZZ$ and some $c'_{a_1,b_1} \in \cCsq(A')$. More precisely, we have
\begin{equation}\label{eq.red2}
c'_{a_1,b_1} =  \sum_{(a_2, \dots, a_n, b_2,\dots, b_n)} c_{(a_1,\dots a_n),(b_1,\dots b_n)} \cdot g_2^{a_2}\cdots g_n^{a_n}\cdot \LL^{b_2\cdot g_2+\dots + b_n \cdot g_n }.
\end{equation}
By assumption, $f_{|\{x\} \times A}$ is torsion for point $x$ on $\VG_{\ge 0}^r$.
Fixing a point $x_1$ on $\VG_{\ge0}$ and applying the inductive $\ET \Rightarrow \FT$ to the restriction $f_{|\{x_1\} \times \VG_{\ge 0}^{r-1} \times A}$ yields that $f_{|\{x_1\} \times \VG_{\ge0}^{r-1} \times A}$ is torsion. Doing this for all $x_1$ shows that $f$ satisfies $\ET$ relative to $A' = \VG_{\ge 0}^{r-1} \times A$.
We now apply the $r=1$ case of $\ET \Rightarrow \CT$ to the sum \eqref{eq.red1} to get
that $c'_{a_1,b_1}$ is torsion for all $(a_1, b_1) \in L'$.
Finally, we apply the inductive $\FT \Rightarrow \CT$ to each $c'_{a_1,b_1}$, which yields that $c_{a,b}$ is torsion, as desired.

So from now on, we assume $r = 1$. We may additionally assume that $L \subset \NN \times \NN$; otherwise, multiply $f$ by a suitable power of $\LL^g$, which is invertible in $\cCsq(\VG_{\ge 0}^r \times A)$. Clearly, we may also assume that $L \ne \emptyset$.

We do an induction on $(a\maxx, b\maxx) :=  \max L$, with respect to the lexicographic order.\footnote{Note that the order of induction is different than in \cite[p.~23]{CH-eval}. This is not essential, but makes the proof cleaner.}

In the case $(a\maxx, b\maxx) = (0,0)$, the only coefficient is $c_{0,0}$, and we essentially have $c_{0,0} = f_{|\{0\}\times A}$ (where ``essentially'' means: up to identifying $A$ with $\{0\} \times A$), so we trivially have $\ET \Rightarrow \CT$.

Suppose next that $a\maxx = 0$ and $b\maxx \ge 1$.
We consider $f\neww \in \cCsq(\VG_{\ge 0} \times A)$, which, in slightly sloppy notation, is defined by $f\neww(x,w) := f(x+1,w) - \LL^{b\maxx}f(x, w)$. Writing out
the sum defining $f$ yields
\[
f\neww = \sum_{(0,b) \in L} c_{0,b}(\LL^b-\LL^{b\maxx})\LL^{bg}.
\]
Since the $(0,b\maxx)$ summand cancels, we can apply induction to $f\neww$ (note that $\ET$ for $f$ implies $\ET$ for $f\neww$) and obtain that for each $b < b\maxx$, the coefficient $c_{0,b}(\LL^b-\LL^{b\maxx})$ of $f\neww$ is torsion. Since $\LL^b-\LL^{b\maxx}$ is invertible in $\cCsq(A)$, we obtain that $c_{0,b}$ is torsion for all $b < b\maxx$. It remains to show that $c_{0,b\maxx}$ is torsion. To this end, consider
$f_{|\{0\} \times A} = \sum_b c_{0,b}$. Since $f_{|\{0\} \times A}$ is torsion by assumption, and since all but one summand of the sum are also torsion, so is the last remaining summand $c_{0,b\maxx}$.

Finally, suppose that $a\maxx > 0$. (This part is irrelevant for the proof of Proposition~\ref{p.4.1}(1).) We again consider $f\neww(x,w) := f(x+1,w) - \LL^{b\maxx}f(x, w)$, and denote
its coefficients by $c'_{a,b}$, i.e.,
$f\neww = \sum_{a,b} c'_{a,b} g^a \LL^{bg}$.
Again, $\ET$ holds for $f\neww$, and since $c'_{a\maxx b\maxx} = 0$, we can apply induction and hence obtain that all $c'_{a,b}$ are torsion. We have
\begin{align*}
c'_{a\maxx-1,b\maxx} &=
\underbrace{a\maxx\cdot c_{a\maxx,b\maxx}\LL^{b\maxx}
+ c_{a\maxx-1,b\maxx}\LL^{b\maxx}}_{\text{contribution from }f(x+1,w)} - \underbrace{c_{a\maxx-1,b\maxx}\LL^{b\maxx}}_{\!\!\!\!\!\!\text{contribution from }\LL^{b\maxx}f(x,w)\!\!\!\!\!\!}\\
&=a\maxx\cdot c_{a\maxx,b\maxx}\LL^{b\maxx},
\end{align*}
so from $c'_{a\maxx-1,b\maxx}$ being torsion, we obtain that $c_{a\maxx,b\maxx}$ is torsion. (Here, we use that $\LL^{b\maxx }$ is invertible.)
To get that the remaining coefficients of $f$ are also torsion, apply induction once more, this time to $f - c_{a\maxx,b\maxx}g^{a\maxx}\LL^{b\maxx g}$.
\end{proof}


\begin{proof}[Proof of Lemma~\ref{l}]
Set $f_b := \sum_a c_{a,b}g^a$ (so that $f = \sum_b f_b\LL^{b\cdot g}$).
Recall that we assume $\EZ$ and $\CT$ and want to prove that $f_b = 0$.

Fix a positive integer $N$ such that $N\cdot c_{a,b} = 0$ for all $(a, b) \in L$.
For each $i=(i_1, \dots, i_{r}) \in \{0,\ldots,N-1\}^r$, let $s_i$ be the $\cS$-definable map $\VG_{\ge0}^r \times A\to \VG_{\ge0}^r \times A$ sending $(x,w) \in \NN^r$ to $(Nx + i, w)$,
and let $f_i := s_i^{*}(f)$ and $f_{i,b} := s_i^{*}(f_b)$ be the pullbacks. Then $\EZ$ for $f$ clearly implies $\EZ$ for $f_i$, i.e., $f_{i|\{x\} \times A} = 0$ for every point $x$ on $\VG_{\ge 0}^r$. Note that
\begin{equation}\label{eq.f_i}
f_i = \sum_b f_{i,b}\cdot\LL^{b\cdot  (Ng + i)}
\end{equation}
and
\[
f_{i,b} = \sum_{a}  c_{a,b}\cdot  (Ng + i)^a ,
\]
(where we use some sloppy notation: $Ng + i$ is the map sending a point $(x,w)$ on $\VG^r_{\ge 0} \times A$ to the point $Nx + i$ of $\VG^r_{\ge 0}$).
After multiplying out the powers $(Ng + i)^a$ and dropping all those summands which have a factor $N$ (they disappear due to the factor $c_{a,b}$), one obtains
\[
f_{i,b} = \sum_{a}  c_{a,b}\cdot  i^a,
\]
which does not depend on $g$ anymore. (More formally, $f_{i,b}$ is the pullback
of a motivic function on $A$ along the projection $\VG^r_{\ge 0} \times A \to A$.)
Thus Proposition~\ref{p.4.1}(1) applies to \eqref{eq.f_i} and yields, using $\EZ \Rightarrow \FZ$, that $f_{i,b} = 0$.
Since this holds for all $i$, this implies $f_b = 0$, as claimed.
\end{proof}

In the next section, we will need a lemma whose proof is quite similar to the above one. We prove it right away, while the reader (hopefully) still has the above proof in mind:

\begin{lem}\label{l2}
Suppose that all structures in $\cS$ are elementarily equivalent (so that we can identify the set of points on $\VG_{\ge 0}$ with $\NN$) and
that $f = \sum_{a} c_a g^a$ is a motivic function on $\VG_{\ge 0}$
satisfying $\CT$ (i.e., each $c_a$ is torsion). Suppose moreover that
$\EZ$ holds almost everywhere, in the sense that the restriction $f_{|\{x\} \times A}$ is zero for all but finitely many $x \in \NN$. Then $f = 0$.
\end{lem}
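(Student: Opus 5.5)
Here $f=\sum_a c_a g^a$ is a motivic function on $\VG_{\ge0}\times A$ (so $r=1$ and $L\subset \NN\times\{0\}$), and we follow the proof of Lemma~\ref{l}. First we fix a positive integer $N$ with $N c_a=0$ for all $a$. For each $i\in\{0,\dots,N-1\}$ we take the definable map $s_i(x,w)=(Nx+i,w)$ and the pullback $f_i:=s_i^*(f)=\sum_a c_a (Ng+i)^a$. When we expand $(Ng+i)^a$ binomially, every term containing a factor $N$ is killed by $c_a$. Hence $f_i=\sum_a c_a i^a$, which is the pullback of an element $d_i:=\sum_a c_a i^a\in\cCsq(A)$ along the projection $\VG_{\ge0}\times A\to A$. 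In particular, for every $x\in\NN$ the restriction $f_{i|\{x\}\times A}$ is, under the identification $\{x\}\times A\cong A$, equal to $d_i$ and independent of $x$.

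Next we use the almost-everywhere hypothesis. Since $f_{|\{y\}\times A}=0$ for all but finitely many $y\in\NN$, we can pick, for each $i$, some $x$ with $Nx+i$ outside the finite exceptional set. Then
\[
d_i=f_{i|\{x\}\times A}=f_{|\{Nx+i\}\times A}=0 .
\]
Here we use that restriction commutes with pullback along $s_i$, and that $s_i$ restricts to a bijection $\{x\}\times A\to\{Nx+i\}\times A$ compatible with the identifications with $A$. Therefore $f_i=0$ in $\cCsq(\VG_{\ge0}\times A)$ for every $i$. Under the standing assumption that all structures in $\cS$ are elementarily equivalent, the point $x$ really is a natural number, so this choice is legitimate.

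Finally, $\VG_{\ge0}\times A$ is the disjoint union of the definable pieces $\{(y,w): y\equiv i \bmod N\}$, and $s_i$ is a definable bijection from $\VG_{\ge0}\times A$ onto the $i$-th piece. Since motivic functions can be glued along finite definable partitions and pulled back along definable bijections, $f$ restricted to the $i$-th piece corresponds to $f_i=0$. So $f$ vanishes on each piece, and hence $f=0$.

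The main obstacle is the bookkeeping. We must check that $s_i^*$ is compatible with restriction to fibres and with the partition argument. Either step can be bypassed: since $f_i$ is constant along $g$, one can instead apply Proposition~\ref{p.4.1}(1) with $L=\{(0,0)\}$, using that $\EZ$ holds for $f_i$ everywhere because all fibres coincide with $d_i=0$. This is exactly the final step of the proof of Lemma~\ref{l}, so we will simply cite it there.
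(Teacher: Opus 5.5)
Your proof is correct and follows the paper's argument essentially verbatim: choose $N$ killing all $c_a$, pull back along $s_i\colon x\mapsto Nx+i$ to get the $g$-independent function $f_i=\sum_a c_a i^a$, use a single point $x$ with $Nx+i$ outside the finite exceptional set to get $f_i=0$, and conclude $f=0$ from the vanishing of all $f_i$. The only difference is that you spell out the compatibility of pullback with restriction to fibres and the gluing along the residue classes mod $N$, which the paper leaves implicit.
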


\begin{proof}
We start as in the proof of Lemma~\ref{l}:
Fix $N > 0$ such that $N\cdot c_{a} = 0$ for all $a$, define
$s_i\colon \VG_{\ge0}\to \VG_{\ge0}, x \mapsto Nx + i$ as before for $i \in \{0, \dots, N-1\}$ and let $f_i := s_i^{*}(f)$ be the pullback.
The computation we did for $f_{i,b}$ in the above proof now applies directly to $f_i$ and yields $f_i = \sum_a c_a\cdot i^a$.
Since $\EZ$ holds for $f$ almost everywhere, it in particular holds at some points in the image of $s_i$, so there exists at least one $x \in \NN$ such that $f_{i|\{x\} \times A} = 0$. Since $f_i$ does not depend on $x$, this implies $f_i = 0$. Since this holds for all $i$, we get $f = 0$.
\end{proof}

\section{Integrability}
\label{s.int}

The second place where the furthermore part of \cite[Proposition~4.1]{CH-eval} is used (namely, apart from \cite[Theorem~1]{CH-eval}) is \cite[Proposition~4.2]{CH-eval}, which in turn is used in the proof of \cite[Theorem~2]{CH-eval}.
That Proposition~4.2 lists some statements which are claimed to be equivalent to some motivic function being integrable in relative dimension $0$. Statement (iii) from the proposition however is too strong to be equivalent.
Proposition~\ref{p.4.2} below is a corrected version of the equivalence between integrability and (iii), but for simplicity, this corrected version is stated in a less general setting than \cite[Proposition~4.2]{CH-eval}. That less general version is enough for the application in the proof of \cite[Theorem~2]{CH-eval}. The notion of integrability for motivic functions is recalled in \cite[Section 3.4]{CH-eval}, following \cite{CLoes} and \cite{CLexp}.

%
%
%
%
%
%

\begin{prop}\label{p.4.2}
Let $A, f, L,\dots$ be as in Assumption~\ref{a.f}, and suppose moreover that $A$ is of the form $A = \RF^n \times Z$, for some $\cS$-definable set $Z$.
Set $L_0 := L \cap (\NN^r \times (\ZZ\setminus\NN)^r)$.
Then the following are equivalent:
\begin{enumerate}
 \item $f$ is integrable in relative dimension $0$ over $Z$, along the projection $X \to Z$.
 \item The sub-sum $f_\infty := \sum_{(a,b) \in L \setminus L_0}  c_{a,b}\cdot  g^a \cdot  \LL^{b\cdot  g}$ is equal to $0$ (as an element of $\cCsq(\VG_{\ge 0}^r \times A)$).
\end{enumerate}
\end{prop}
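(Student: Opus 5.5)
The plan is to prove (2)$\Rightarrow$(1) directly and (1)$\Rightarrow$(2) by contradiction, using Proposition~\ref{p.4.1}(2) and a variant of Lemma~\ref{l} where nullity is replaced by integrability.

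\emph{Direction (2)$\Rightarrow$(1).} If $f_\infty=0$, then $f=\sum_{(a,b)\in L_0}c_{a,b}g^a\LL^{b\cdot g}$, where every $b$ has all coordinates negative. Each such summand is integrable in relative dimension $0$ over $Z$. The $\RF^n$-part is handled by the definition of relative integrability, which allows $\RF$-variables. The $\VG_{\ge0}^r$-sum is a product of one-variable sums $\sum_{x\ge0}x^{a_j}\LL^{b_jx}$ with $b_j<0$, and these are summable by the standard criterion recalled in \cite[Section 3.4]{CH-eval}. Since integrable functions form a group, $f$ is integrable.

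\emph{Direction (1)$\Rightarrow$(2).} Because $f-f_\infty$ is integrable by the previous paragraph, $f_\infty$ is also integrable. It therefore suffices to show: if a function of the form \eqref{sum}, all of whose indices $(a,b)$ satisfy $b\notin(\ZZ\setminus\NN)^r$, is integrable, then it is $0$. I would proceed in the same order as the proofs of Section~\ref{s.proof}.

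First, I would show that all coefficients are torsion. The original argument of \cite[Proposition~4.2]{CH-eval} should give this. Integrability of a sum over $\VG_{\ge0}^r$ with a coordinate $b_j\ge0$ forces the pointwise summands to vanish modulo torsion, by the "divergent terms must cancel" argument, reducing to $r=1$ and fibering as in the proof of Proposition~\ref{p.4.1}. Concretely, I would combine integrability with the difference operator $f(x+1,w)-\LL^{b\maxx}f(x,w)$ used in the proof of Proposition~\ref{p.4.1}, inducting on $\max L$. The base case is the statement that a function $c\,g^a\LL^{bg}$ with $b\ge0$ is integrable in relative dimension $0$ only if $c$ is torsion. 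This is the point where the faulty furthermore part was previously used. I would derive it from the fact that the relevant summation produces a function which, by Proposition~\ref{p.4.1}(2) applied to a suitable expression, must have torsion coefficients. In that application, the $\RF^n$ factor of $A$ lets us push forward to $Z$ and apply the relative criterion fiberwise.

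Second, knowing $\CT$, I would mimic Lemma~\ref{l}. Choose $N$ killing all $c_{a,b}$ and pull back along $s_i$. Each piece $f_i$ then has the form $\sum_b (\sum_a c_{a,b}i^a)\LL^{b\cdot(Ng+i)}$, with coefficients independent of $g$ and $L\subset\{0\}\times\ZZ^r$. For such functions, integrability with exponents outside $(\ZZ\setminus\NN)^r$ should force the coefficients to actually vanish. To prove this, I would use Proposition~\ref{p.4.1}(1) in place of the old furthermore part, applied to the integral of $f_i$ over the divergent direction: truncate the sum and compare with the integrability condition. Integrability of $f$ implies integrability of each $f_i$, since the $s_i$ are definable injections with disjoint images. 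So all $f_{i,b}=0$, whence $f_b=0$ and $f_\infty=0$.

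\emph{Main obstacle.} I expect the hardest step to be the base case: showing that integrability of $\sum_b d_b\LL^{b\cdot g}$ with $d_b\in\cCsq(A)$ and each $b$ having some nonnegative coordinate forces $d_b=0$. My first attempt would be to use the difference-operator trick to isolate $b\maxx$. Then, for $b_j\ge0$, I would argue that integrability over $\VG_{\ge0}$ of $d\,\LL^{b_jg_j}$ means, by the definition in \cite[Section 3.4]{CH-eval}, that $d$ vanishes in the relevant Grothendieck ring after pushing forward from $A=\RF^n\times Z$ to $Z$. That would yield $d=0$, with Lemma~\ref{l2} handling the finitely many exceptional points.
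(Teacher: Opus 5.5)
There is a genuine gap at the step you yourself flag as the main obstacle. Your direction (2)$\Rightarrow$(1) and the reduction to ``$f_\infty$ integrable implies $f_\infty=0$'' are fine. The core of (1)$\Rightarrow$(2), however, is missing: you never turn the integrability hypothesis into something Proposition~\ref{p.4.1}(2) or Lemma~\ref{l} can use.

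\begin{itemize}
\item Those results need $\ET$ or $\EZ$ for the function they are applied to, and integrability of $f_\infty$ is not a pointwise condition.
\item So your difference-operator induction cannot get started. Its base case is exactly the unproved claim. In Proposition~\ref{p.4.1} the base case, and the step for $c_{0,b\maxx}$, come from restricting to $\{0\}\times A$, and nothing plays that role here.
\item ``Truncate the sum and compare with the integrability condition'' does not say how integrability enters.
\item Your proposed way out fails twice. First, vanishing of the pushforward of $d$ from $\RF^n\times Z$ to $Z$ does not give $d=0$. Second, integrability is not characterized by that pushforward. Take $\xi$ the $\RF$-coordinate and $d=\mathbf{1}_{\{\xi=0\}}-\mathbf{1}_{\{\xi=1\}}$ on $\RF\times Z$. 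Then $d$ is nonzero with pushforward $0$, yet $f=d$ (constant in $g$) has $f_\infty=f\neq0$, so it is not integrable by the very statement you are proving.
\end{itemize}

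The missing idea is to unwind the definition of integrability. For $r=1$ it yields a partition of $\VG_{\ge0}\times A$ into pieces $X_i$ with bijections $\theta_i$. When $r_i=1$ these have the form $\theta_i(x,w)=(e_ix+d_i(w),w)$, and $\theta_i^*(f)$ has a presentation using only negative exponents. Pulling back the original presentation gives a second expression $\sum_b\LL^{bd_i}\theta_i^*(f_b)\LL^{be_ig}$ of the same function.

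The difference of these two expressions is genuinely $0$. So Proposition~\ref{p.4.1}(2) and then Lemma~\ref{l} apply to it, and show $\theta_i^*(f_b)=0$ for every $b\ge0$. The pieces with $r_i=1$ cover all but finitely many points in each fiber over $A$, so Lemma~\ref{l.almost0} then gives $f_b=0$. Thus the nullity results are applied to a difference of two presentations, not to $f$ itself, and no torsion-first step is needed.

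For general $r$, ``fibering'' is also not enough, because different terms of $f_\infty$ have their nonnegative exponent in different coordinates. The paper first derives \eqref{eq.2var} by working relative to $\VG_{\ge0}^{r-1}\times A$. It then telescopes through the partial sums over $M_i=\NN^r\times(\ZZ\setminus\NN)^i\times\ZZ^{r-i}$.
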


The stronger statement \cite[Proposition~4.2]{CH-eval}(iii) claimed, instead of $f_\infty = 0$, that $c_{a,b} = 0$ for all $(a,b) \in L \setminus L_0$.
Our above Example~\ref{ex:non-torsion} is also a potential counter-example to that: The function $f$ in that example is integrable since it is zero, but $c_{1,0}$ and $c_{2,0}$ are non-zero.

\begin{remark}
In the more general setting of \cite[Proposition~4.2]{CH-eval}, $f$ itself is not required to be of the shape from Assumption~\ref{a.f}.
However, one can bring any motivic function $f$ into that shape by partitioning the domain into finitely many pieces $X_i$ and pulling back along certain affine linear maps $\theta_i$. The strong version of the corrected \cite[Proposition~4.2]{CH-eval} would be: For every choice of such $X_i$ and $\theta_i$,
the original $f$ satisfies Proposition~\ref{p.4.2}(1) if and only if
each pullback $\theta_i^*(f)$ satisfies
Proposition~\ref{p.4.2}(2).
That full version easily follows from the version given in Proposition~\ref{p.4.2}.\end{remark}

Before proving Proposition~\ref{p.4.2}, we explain how it can replace the wrong part of \cite[Proposition~4.2]{CH-eval} in the proof of \cite[Theorem~2]{CH-eval}.
Note that we can freely use \cite[Theorem~1]{CH-eval} and its corollaries.
The place where that part of \cite[Proposition~4.2]{CH-eval} is used
is in Case~1 on \cite[p.~25]{CH-eval}. There, after some reduction, we are in the situation where we need to prove the following:

\begin{prop}\label{p.T2}
Let $A, f, L,\dots$ be as in Assumption~\ref{a.f}. Suppose moreover that $A = \RF^n \times Z$ and that $f_{|\VG^r_{\ge 0} \times \RF^n \times\{z\}}$ is integrable for every point $z$ on $Z$. Then $f$ is integrable in relative dimension $0$ over $Z$ (along the projection $\VG^r_{\ge 0} \times \RF^n \times Z \to Z$).
\end{prop}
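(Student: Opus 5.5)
We will deduce this from Proposition~\ref{p.4.2}, applied both fiberwise and globally. By Proposition~\ref{p.4.2}, it suffices to show that the sub-sum $f_\infty = \sum_{(a,b)\in L\setminus L_0} c_{a,b}\, g^a \LL^{b\cdot g}$ equals $0$ in $\cCsq(\VG_{\ge 0}^r \times A)$. The plan is to prove $f_\infty=0$ in three steps: first fiberwise over each point $z$ of $Z$, then by showing that the fiberwise statement gives \EZ{} for $f_\infty$ on the whole space, and finally by invoking Proposition~\ref{p.T1} to pass from \EZ{} to \FZ.

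\emph{Step 1: fiberwise vanishing.} Fix a point $z$ on $Z$. The restriction $f_z := f_{|\VG^r_{\ge 0}\times \RF^n\times\{z\}}$ is again of the shape of Assumption~\ref{a.f}, with $A$ replaced by $\RF^n\times\{z\}$, the same index set $L$, and coefficients $c_{a,b|\RF^n\times\{z\}}$. By hypothesis $f_z$ is integrable. Integrability of a function on $\VG^r_{\ge 0}\times \RF^n\times\{z\}$ is the same as integrability in relative dimension $0$ over the one-point set $\{z\}$. Hence Proposition~\ref{p.4.2}, applied with $Z=\{z\}$, yields $(f_\infty)_{|\VG^r_{\ge 0}\times\RF^n\times\{z\}} = 0$. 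This works because restriction commutes with taking the sub-sum, since $L_0$ depends only on $L$.

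\emph{Step 2: \EZ{} for $f_\infty$.} Take any point $x$ on $\VG_{\ge0}^r$. Step~1 gives that the restriction of $f_\infty$ to $\{x\}\times \RF^n \times \{z\}$ vanishes for every point $z$. Hence the function $(f_\infty)_{|\{x\}\times A}$ on $A$ evaluates to $0$ at every point of $A$, where a point of $A$ consists of a point of $\RF^n$ together with a point $z$. Here we use \cite[Theorem~1]{CH-eval}, which we may freely use at this stage: a motivic function on $\{x\}\times A$ that vanishes at every point is zero. Thus $f_\infty$ satisfies \EZ.

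\emph{Step 3: conclusion.} Now $f_\infty$ itself is of the shape of Assumption~\ref{a.f}, with index set $L\setminus L_0$. Proposition~\ref{p.T1} therefore gives \FZ, i.e.\ $f_\infty = 0$. Finally, Proposition~\ref{p.4.2} ((2)$\Rightarrow$(1)) gives that $f$ is integrable in relative dimension $0$ over $Z$.

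The main obstacle is Step~1, specifically matching ``integrable'' for the fiber with the relative notion over a point. This requires checking that Proposition~\ref{p.4.2} applies when $Z$ is a single point, possibly in a class $\cS$ restricted to a single field. If that identification is delicate, we would instead proceed via Step~2 directly: apply \cite[Theorem~1]{CH-eval} on $\{x\}\times A$, using fiberwise vanishing obtained from the definition of integrability in \cite[Section 3.4]{CH-eval}, which is itself checked pointwise.
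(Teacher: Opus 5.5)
Your proof is correct and follows the paper's argument. You apply Proposition~\ref{p.4.2} on each fiber over a point $z$, which the paper also does (so the identification you worry about in your last paragraph is used there too), deduce $f_\infty=0$ globally, and conclude because $f$ is then a sum over $L_0$. The only difference is the gluing step: the paper gets $f_\infty=0$ from fiberwise vanishing in one stroke via \cite[Corollary~3.6.5]{CH-eval}, whereas your detour through \EZ{}, \cite[Theorem~1]{CH-eval} on each slice $\{x\}\times A$, and Proposition~\ref{p.T1} is valid but unnecessary, since \cite[Theorem~1]{CH-eval} applied directly to $f_\infty$ on all of $\VG^r_{\ge 0}\times A$ already suffices.
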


Here is a proof of this using Proposition~\ref{p.4.2} instead of \cite[Proposition~4.2]{CH-eval}.

\begin{proof}[Proof of Proposition~\ref{p.T2}]
Applying Proposition~\ref{p.4.2} to $f_{|\VG^r_{\ge 0} \times \RF^n \times\{z\}}$ yields that the sub-sum
\[
\sum_{(a,b) \in L \setminus L_0}  c_{a,b|\RF^n \times \{z\}}\cdot  g^a \cdot  \LL^{b\cdot  g}
\]
(for $L_0$ as in Proposition~\ref{p.4.2}) is equal to $0$.
Since this holds for every point $z$ on $Z$, \cite[Corollary~3.6.5]{CH-eval} yields that also
\[
\sum_{(a,b) \in L \setminus L_0}  c_{a,b}\cdot  g^a \cdot  \LL^{b\cdot  g}
\]
is equal to $0$. Thus we have $f = \sum_{(a,b) \in L_0} c_{a,b}g^a\LL^{bg}$, and this is clearly integrable.
\end{proof}

In the proof of Proposition~\ref{p.4.2}, we will need the following lemma:

\begin{lem}\label{l.almost0}
Let $f = \sum_{a} c_a g^a$ be a motivic function on $\VG_{\ge 0} \times A$, and denote by $h\colon \VG_{\ge 0} \times A \to A$ the projection.
Suppose that there exists an $\cS$-definable set $X \subset \VG_{\ge 0} \times A$ such that for each point $w$ on $A$, the fiber complement $h^{-1}(w)  \setminus X$ is finite and such that $f_{|X} = 0$. Then $f = 0$.
\end{lem}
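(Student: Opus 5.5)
We will show that $f$ satisfies $\EZ$ (relative to $A$) and then that all coefficients $c_a$ are torsion, after which Lemma~\ref{l2}, or rather its proof relativized over $A$, will give $f=0$. The point of working fibrewise is that the exceptional finite set depends on $w$, so we cannot directly restrict to a single point $x$ of $\VG_{\ge 0}$. We therefore argue over points of $A$ first.

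\emph{Reduction to points of $A$.} By \cite[Theorem~1]{CH-eval} (which we may use freely, as noted before Proposition~\ref{p.T2}), it suffices to show that $f$ vanishes at every point $(K,x,w)$ of $\VG_{\ge0}\times A$. Equivalently, it suffices to show that for every point $w$ on $A$, the restriction $f_w := f_{|\VG_{\ge 0}\times\{w\}}$ is $0$. Fix such a $w$, with $K\in\cS$. Working in the theory of the single structure $K$ (adding $w$ as constants, which is harmless since we are in a Denef--Pas expansion by constants), all structures are elementarily equivalent, as required in Lemma~\ref{l2}. Now $f_w=\sum_a c_a(w)g^a$ with $c_a(w)\in\cCsq(\{w\})$. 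By hypothesis $h^{-1}(w)\setminus X$ is finite and $f_{|X}=0$, so $f_w$ restricted to $\{x\}$ is $0$ for all but finitely many $x\in\NN$.

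\emph{Torsion of the coefficients.} To apply Lemma~\ref{l2} we still need $\CT$ for $f_w$. We derive it from a variant of Proposition~\ref{p.4.1}(2) that uses only cofinitely many evaluation points. We run the same induction on $a\maxx$ with $b$'s absent, i.e.\ $L\subset\NN\times\{0\}$. The difference operator $f\neww(x)=f(x+1)-f(x)$ preserves vanishing at all sufficiently large $x$, and it lowers $a\maxx$ while producing the coefficient $a\maxx c_{a\maxx}$. The base case $a\maxx=0$ is a constant $c_0$ that vanishes at some (indeed cofinitely many) point, so $c_0=0$. Inductively, $a\maxx c_{a\maxx}$ is torsion, hence $c_{a\maxx}$ is torsion. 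We then subtract $c_{a\maxx}g^{a\maxx}$; this destroys vanishing, but only up to torsion. For that reason we run the induction directly on the statement ``$f$ is torsion at all but finitely many points $\Rightarrow \CT$'', exactly as in Proposition~\ref{p.4.1}(2). The base case is that a constant which is torsion at some point is torsion. With $\CT$ for $f_w$ in hand, Lemma~\ref{l2} gives $f_w=0$.

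\emph{Conclusion and obstacle.} Since $f_w=0$ for every point $w$ of $A$, $f$ evaluates to zero everywhere, and \cite[Theorem~1]{CH-eval} yields $f=0$. The main obstacle I expect is the legitimacy of passing to the fibre over a point $w$ and treating it as a single elementary class. Two things need checking there. First, that ``finitely many'' exceptional points in $h^{-1}(w)$ really are standard naturals, so that the shifts $s_i$ in Lemma~\ref{l2} hit non-exceptional points. Second, that torsion and vanishing of $c_a(w)$ are meaningful in $\cCsq(\{w\})$. If this fibrewise passage is problematic, the fallback is to argue directly over $A$. One would partition $A$ definably according to the size of the exceptional set, so that on each piece the complement of $X$ lies in $\{x\le M\}\times A$ for a fixed $M$. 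Then $f_{|\{x\}\times A}=0$ holds for all $x>M$, and the argument above applies uniformly after pulling back along $x\mapsto x+M+1$. Compactness over the non-elementary class $\cS$ is exactly what would require care here.
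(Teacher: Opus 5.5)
Your proof is correct and has the same outer structure as the paper's. You reduce to the fibre over a single point $w$ of $A$ and pass to $\cS(w)$, so that points of $\VG_{\ge 0}$ are just natural numbers. You then establish $\CT$ and finish with Lemma~\ref{l2}. For the reduction the paper cites \cite[Corollary~3.6.5]{CH-eval} and you invoke \cite[Theorem~1]{CH-eval} directly; these amount to the same thing.

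The difference is in how you get $\CT$. You re-run the difference-operator induction from the proof of Proposition~\ref{p.4.1}(2), in the case $r=1$ with only $b=0$, under the weaker hypothesis ``torsion at all but finitely many points''. That works. You also rightly noticed that the induction must carry the torsion statement rather than vanishing, since subtracting $c_{a\maxx}g^{a\maxx}$ preserves only torsion.

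The paper proves nothing new here and uses a shift instead. It picks $d$ with $(i,w)\in X$ for all $i\ge d$ and pulls back along $\theta(x)=x+d$. Then $\theta^*(f)=\sum_a c_a(g+d)^a$ is literally the zero function, so the torsion part of Proposition~\ref{p.4.1} applies as a black box and shows that the coefficients of $\theta^*(f)$ are torsion. The coefficient of $g^a$ in $\theta^*(f)$ is $c_a$ plus an integer combination of the $c_{a'}$ with $a'>a$. A downward induction on $a$ therefore gives $\CT$ for $f$ itself. This is shorter and reuses the proposition, while your route is self-contained but duplicates part of its proof.

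The same observation disposes of the obstacles you list. Every $K\in\cS$ has value group $\ZZ$, so the exceptional points in $h^{-1}(w)$ are genuine natural numbers and a single shift $d$ exists in each fibre. Hence the fallback of partitioning $A$ by the size of the exceptional set, with its compactness issues over the non-elementary class $\cS$, is never needed.
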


\begin{proof}
By \cite[Corollary~3.6.5]{CH-eval}, it suffices to prove that $f_{|h^{-1}(w)} = 0$ for every point $w$ on $A$. Thus it suffices to prove the lemma for $A$ replaced by $w$ and $\cS$ replaced by $\cS(w)$, meaning firstly that $A$ can be entirely dropped from the notation and secondly that all stuctures in $\cS$ are elementarily equivalent, so that we can identify the set of points on $\VG_{\ge 0}$ with $\NN$.

Fix any $d \in \NN$ such that $i \in X$ for all $i \ge d$, and define $\theta\colon \VG_{\ge 0} \to \VG_{\ge 0}$ by $\theta(x) = x + d$.
Then the pullback $\theta^*(f) = \sum_a c_a (g+d)^a$ is entirely zero.
By Proposition~\ref{p.4.1} (1), $\theta^*(f)$ has torsion coefficients.
The coefficient of $g^a$ in $\theta^*(f)$ has the form $c_a + \sum_{a'>a} s_{a'}c_{a'}$ for some integers $s_{a'}$, so using a downwards induction on $a$, one obtains that all the $c_a$ are also torsion.
Now Lemma~\ref{l2} implies that $f = 0$.
\end{proof}

\begin{proof}[Proof of Proposition~\ref{p.4.2}]
First note that the implication (2) $\Rightarrow$ (1) is trivial:
By (2), $f$ can be written as $f = \sum_{(a,b) \in L_0}  c_{a,b}\cdot  g^a \cdot  \LL^{b\cdot  g}$, which is integrable essentially by definition.
So let us now prove (1) $\Rightarrow$ (2).

\medskip

We start by treating the case $r = 1$.

Essentially by definition of integrability (and using the correct part of \cite[Proposition~4.1]{CH-eval}), 
we find a partition of the domain $\VG_{\ge 0} \times A$ of $f$ into sets $X_i$ and
definable bijections $\theta_i\colon \VG^{r_i} \times A_i \to X_i$ such that
each pullback $f_i := \theta_i^*(f)$ is of the form
\begin{equation}\label{eq.sum.d}
f_i = \sum_{(a,b) \in L_i} d_{a,b,i} g^a \LL^{b\cdot g}
\end{equation}
with $L_i \subset \NN^{r_i} \times (\ZZ \setminus \NN)^{r_i}$. Moreover,
we have $r_i \in \{0,1\}$ (since we are assuming $r = 1$), and in the case $r_i=1$, the map $\theta_i$ is of the form
\[
\theta_i(x, w) = (e_ix+d_i(w), w).
\]
for some positive integer $e_i$ and some definable function $d_i\colon A_i \to \VG_{\ge 0}$.

Now recall that $f$ was given as a sum $f = \sum_{b} f_b\LL^{b\cdot g}$, where
$f_b := \sum_a c_{a,b} g^a$, and that our goal is to prove that
$f_\infty = \sum_{b \ge 0} f_b\LL^{b\cdot g}$ is zero.
We will more precisely prove that $f_b = 0$ for every $b\ge 0$.
To this end, it suffices to prove that for each $i$ with $r_i = 1$, the pullback
$\theta_i^*(f_b)$ is zero. Indeed, this implies $f_{b|X} = 0$, where
$X$ is the union the corresponding images $X_i$, and that union $X$
satisfies the assumption of Lemma~\ref{l.almost0}, which then yields $f_b = 0$.

We now fix an $i$ with $r_i = 1$ for the remainder of the proof of the $r=1$ case. The pullback of $f_b$ is of the form
\[
f_{i,b} = \theta_i^*(f_b) = \sum_a c'_{a,b,i} g^a,
\]
and expressing the pullback of $f$ in terms of this gives
\begin{equation}\label{eq.sum.c'}
f_i = \sum_b f_{i,b} \LL^{b\cdot (e_ig + d_i)}
=  \sum_b \LL^{b\cdot d_i} f_{i,b} \LL^{b e_ig}
\end{equation}

Subtracting \eqref{eq.sum.d} from \eqref{eq.sum.c'} yields a sum
\[
g = \sum_{a,b} d'_{a,b} g^a\LL^{be_i}
\]
which is zero (as a motivic function). In particular, $g$ satisfies $\EZ$, and
by Proposition~\ref{p.4.1}(2), its coefficients $d'_{a,b}$ are torsion, so Lemma~\ref{l} implies that for each fixed $b$ we have
\[
\sum_{a } d'_{a,b} g^a  = 0.
\]
Using that $d'_{a,be_i} = \LL^{bd_i}c'_{a,b,i}-d_{a,be_i,i}$, we obtain
\begin{equation}\label{eq.diff}
\sum_{a } ( \LL^{bd_i}c'_{a,b,i}-d_{a,be_i,i}) g^a  = 0
\end{equation}
for every $b$. Now recall that for $b \ge 0$, we have $d_{a,be_i,i} = 0$,
so after multiplying by $\LL^{-bd_i}$, \eqref{eq.diff} becomes $f_{i,b} = 0$, which is what we had to show in the case $r = 1$.

\medskip

Let us now come back to general $r$. To simplify notation, we set $c_{a,b} = 0$ for all $(a,b) \notin L$, so that we can omit $L$ from the notation.
We first prove a variant of the desired result (1) $\Rightarrow$ (2), namely that
(1) implies
\begin{equation}\label{eq.2var}
\sum_{(a,b) \in \NN^r \times (\NN \times \ZZ^{r-1})} c_{a,b} g^a\LL^{bg} = 0.
\end{equation}

To obtain this, set $A' = \VG_{\ge 0}^{r-1} \times A$ and write $f$ as
\begin{equation}\label{eq.int1}
f = \sum_{(a_1, b_1) \in \NN \times \ZZ} c'_{a_1,b_1} \cdot g^{a_1}\cdot \LL^{b_1\cdot g_1},
 \end{equation}
where
\[
c'_{a_1,b_1} =  \sum_{(a_2, \dots, a_n, b_2,\dots, b_n) \in \NN^{n-1} \times \ZZ^{n-1}} c_{(a_1,\dots a_n),(b_1,\dots b_n)} \cdot g_2^{a_2}\cdots g_n^{a_n}\cdot \LL^{b_2\cdot g_2+\dots + b_n \cdot g_n }.
\]
That $f$ is integrable over $Z$ implies that it is also integrable over $A'$,
so by the $r = 1$ case applied to \eqref{eq.int1}, we have
\[
\sum_{(a_1,b_1) \in \NN \times \NN} c'_{a_1,b_1}g^{a_1}\LL^{b_1g_1} = 0.
\]
Plugging in the $c'_{a_1,b_1}$ yields \eqref{eq.2var}.

\medskip

Finally, we are in the position to prove (1) $\Rightarrow$ (2) for arbitrary $r$. To this end,
set
\[
f_i := \sum_{(a,b) \in M_i} c_{a,b}g^a\LL^{bg},
\]
where $M_i = \NN^r \times (\ZZ \setminus \NN)^i \times \ZZ^{r-i}$, for $i = 0, \dots, r$. Then $f_r = f - f_\infty$, so to obtain $f_\infty = 0$, we can equivalently show that $f_r = f$.

Clearly we have $f_0 = f$. For $i \ge 1$, we have
\[
f_{i-1} - f_i =
\sum_{(a,b) \in M_{i-1} \setminus M_i} c_{a,b}g^a\LL^{bg},
\]
and $M_{i-1} \setminus M_i = \{(a,b) \in M_{i-1} \mid b_i \in \NN\}$, so
``(1) $\Rightarrow$ \eqref{eq.2var}'' applied to $f_{i-1}$ (with the first and the $i$th coordinate swapped) yields that $f_{i-1} - f_i = 0$.
Putting everything together gives $f = f_0 = \dots = f_r$.
\end{proof}

The authors would like to apologise for any inconvenience caused.


\bibliographystyle{amsplain}
\bibliography{anbib}

\end{document}